\theoremstyle{plain}
\newtheorem{theorem}{Theorem}
\newtheorem{lemma}[theorem]{Lemma}
\newtheorem{question}[theorem]{Question}
\numberwithin{equation}{section}
\theoremstyle{remark}
\newtheorem*{ack}{Acknowledgements}
\theoremstyle{definition}
\def\Z{\mathbb{Z}}
\def\R{\mathbb{R}}
\def\CP2{\mathbb{CP}^2}
\def\be{\begin{equation}} \def\ee{\end{equation}}
\def\L{\L_{\R}}
\def\sm{\setminus}
\def\bp{\begin{pmatrix}}
\def\ep{\end{pmatrix}}
\def\bn{\begin{enumerate}}
\def\en{\end{enumerate}}
\def\ba{\begin{array}}
\def\ea{\end{array}}
\def\L{\Lambda}
\def\fr12{\frac{1}{2}}
\def\sp{\operatorname{sp}}
\begin{document}

\title{A specious unlinking strategy}

\author{Stefan Friedl}
\address{Fakult\"at f\"ur Mathematik\\ Universit\"at Regensburg\\   Germany}
\email{sfriedl@gmail.com}

\author{Matthias Nagel}
\address{Fakult\"at f\"ur Mathematik\\ Universit\"at Regensburg\\   Germany}
\email{matthias.nagel@mathematik.uni-regensburg.de}

\author{Mark Powell}
\address{
 D\'{e}partement de Math\'{e}matiques,
  UQAM, Montr\'{e}al, QC, Canada
 }
\email{mark@cirget.ca}


\def\subjclassname{\textup{2010} Mathematics Subject Classification}
\expandafter\let\csname subjclassname@1991\endcsname=\subjclassname
\expandafter\let\csname subjclassname@2000\endcsname=\subjclassname
\subjclass{%
 57M25,
 57M27. 
}
\keywords{link, crossing change, unlinking number, splitting number}

\begin{abstract}
We show that the following unlinking strategy does not always yield an optimal sequence of crossing changes: first split the link with the minimal number of crossing changes, and then unknot the resulting components.
\end{abstract}
\maketitle



The \emph{unlinking number $u(L)$} of a link
 $L$ in $S^3$ is the minimal number of crossing changes required to turn a  diagram of $L$ into a diagram of the unlink. Here we take the  minimum over all diagrams of $L$.
Similarly, the  \emph{splitting  number $\sp(L)$}
is the minimal number of crossing changes required to turn a diagram of $L$ into a diagram of a split link. Once again the minimum is taken over all diagrams of $L$.\footnote{This definition was first introduced by Adams \cite{Ad96}. Unfortunately the term `splitting number' was used with a slightly different meaning in \cite{BS13,CFP13,BFP14}.}
Here recall that an  $m$-component link $L$ is   \emph{split} if there are $m$ disjoint balls  in $S^3$, each of which contains a component of $L$.

The detailed study of unlinking numbers and splitting numbers of links was initiated by Kohn \cite{Ko91,Ko93} in the early 1990s, and was continued by several other researchers, see e.g.\ \cite{Ad96,Ka96,Sh12,BS13,Ka13,CFP13}.  See also \cite{BW84, Tr88} for some early work.  Somewhat to our surprise, these are still relatively unstudied topics, and many basic questions remain unanswered.

In our investigations we wondered whether the computation of unlinking numbers can be separated into the two problems of splitting links and unknotting of knots.
 More precisely, the following question arose.

\begin{question}\label{question:efficient-strategy}
  Is the following always the most efficient strategy for unlinking? First split the link with the minimal number of crossing changes, and then unknot the resulting knots.
\end{question}

In this short note we will give a negative answer to the above question.
More precisely, we have the following theorem.

\begin{theorem}\label{thm:efficient-strategy}
The link $L$ in Figure~\ref{fig:our-fantastic-link} has the property
that any sequence of crossing changes for which the initial $\sp(L)$ crossing changes turn $L$ into a split link $S$, and where the remaining crossing changes unknot the components of $S$, has length greater than $u(L)$.
\end{theorem}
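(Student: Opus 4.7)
The plan is to prove the theorem in three steps: exhibit an explicit unlinking sequence giving $u(L) \le n$; compute or bound $\sp(L)$; and establish a uniform lower bound on the length of any split-then-unknot strategy, showing it strictly exceeds $n$. Write $K_1, K_2$ for the components of $L$ and $\ell = \lk(K_1, K_2)$.

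For the lower bound on the cost of the strategy, I would fix an arbitrary sequence $\gamma_1,\dots,\gamma_{\sp(L)}$ of crossing changes converting $L$ into a split link $S = K_1' \sqcup K_2'$ and classify each $\gamma_j$ as either a self-crossing change at $K_1$, at $K_2$, or an inter-component change, with counts $c_1, c_2, a$ respectively summing to $\sp(L)$. Since only inter-component changes alter the linking number, and each does so by $\pm 1$, one has $a \ge |\ell|$. Because inter-component changes do not affect the knot type of either component while each self-crossing change at $K_i$ alters $u(K_i)$ by at most $1$, the triangle inequality yields $u(K_i') \ge u(K_i) - c_i$. Summing,
\[
\sp(L) + u(K_1') + u(K_2') \;\ge\; a + u(K_1) + u(K_2) \;\ge\; |\ell| + u(K_1) + u(K_2).
\]
The values of $u(K_1)$ and $u(K_2)$ will be bounded from below using classical invariants (signature, Seifert genus, or Alexander-polynomial obstructions), and a strict lower bound $\sp(L) > |\ell|$ forcing $a > |\ell|$ in every minimal splitting sequence will be extracted from the refined splitting obstructions of Kohn \cite{Ko91} or Cochran-Friedl-Powell \cite{CFP13}, typically coming from a branched double cover or multivariable Alexander polynomial computation. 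For the upper bound $u(L) \le n$, I would exhibit in Figure~\ref{fig:our-fantastic-link} an explicit unlinking sequence that interleaves self-crossing and inter-component moves; the saving over the naive split-then-unknot bound comes from a well-chosen self-crossing that simplifies the diagrammatic configuration in a way that allows fewer subsequent inter-component changes to split the link.

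The main obstacle is establishing the strict inequality $n < \sp(L) + u(K_1')+u(K_2')$ for the specific $L$. The same partitioning argument gives the universal lower bound $u(L) \ge |\ell| + u(K_1) + u(K_2)$, which is tight for many classical links, so the diagram of Figure~\ref{fig:our-fantastic-link} must be engineered carefully to open a gap between $\sp(L)$ and $|\ell|$ while still permitting an unlinking sequence of length $n = |\ell| + u(K_1) + u(K_2)$. The computational heart of the proof is the parallel verification of these two bounds — the efficient unlinking realising the universal lower bound on $u(L)$, and the refined splitting obstruction excluding it from being realised by a split-then-unknot sequence — for the concrete link in the figure.
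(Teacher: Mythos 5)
Your proposal does not match the link in the paper, and it also contains a gap that the paper's actual mechanism is specifically designed to avoid. The link $L$ of Figure~\ref{fig:our-fantastic-link} has \emph{three} components: an unknot $L_1$ and two trefoils $L_2$, $L_3$, with all pairwise linking numbers zero. The paper's argument is not about opening a gap between $\sp(L)$ and linking numbers. Instead: $\sp(L)=1$, and the only way to split $L$ with one crossing change is an $(L_1,L_1)$ self-crossing change --- inter-component changes are excluded because they would create nonzero linking number, and $(L_2,L_2)$ or $(L_3,L_3)$ changes are excluded because they leave the sublink $L_1\cup L_3$, resp.\ $L_1\cup L_2$, unchanged, and these sublinks are not split (Lemma~\ref{lemma:not-split}, proved via a determinant obstruction coming from Miyazaki's band-sum theorem and the Fox--Milnor condition). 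Since an $(L_1,L_1)$ change cannot alter the knot types of $L_2$ and $L_3$, any split link reached in $\sp(L)=1$ moves still contains two trefoils, so the strategy costs at least $1+1+1=3$, whereas $u(L)=2$: undoing the clasps of $L_2$ and $L_3$ simultaneously unknots them \emph{and} splits the link. The whole point of the example is that the optimal unlinking moves do double duty (unknotting and splitting at once), which the unique minimal splitting move cannot do.

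Within your own two-component framework there is also a genuine logical gap. Your inequality chain gives strategy cost $\sp(L)+u(K_1')+u(K_2') \ge a + u(K_1)+u(K_2)$, so to strictly beat $u(L) = |\ell| + u(K_1)+u(K_2)$ you need $a > |\ell|$ for \emph{every} minimal splitting sequence. But your proposed sufficient condition $\sp(L) > |\ell|$ only yields $a + c_1 + c_2 > |\ell|$: a minimal splitting could use exactly $a=|\ell|$ inter-component moves together with self-crossing changes ($c_i>0$), and those self-changes may genuinely lower the unknotting numbers of the components, in which case the strategy cost can equal $u(L)$ and the claimed strict inequality fails. Ruling this out requires controlling how every admissible minimal splitting sequence acts on the component knot types --- exactly the component-by-component case analysis the paper performs, which in turn rests on the non-splitness of the two-component sublinks. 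Finally, the proposal never exhibits a concrete link nor carries out any of the deferred computations (``will be bounded'', ``must be engineered carefully''); since the theorem is a statement about one specific link, the construction of that link and the verification of its properties are the content of the proof, not details to be filled in later.
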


The remainder of this note comprises the demonstration of this theorem.
\begin{figure}[t]
    \labellist
    \small
    \pinlabel {$L_1$} at -20 130
    \pinlabel {$L_2$} at 80 295
    \pinlabel {$L_3$} at 415 295
    \endlabellist
    \includegraphics[scale=0.4]{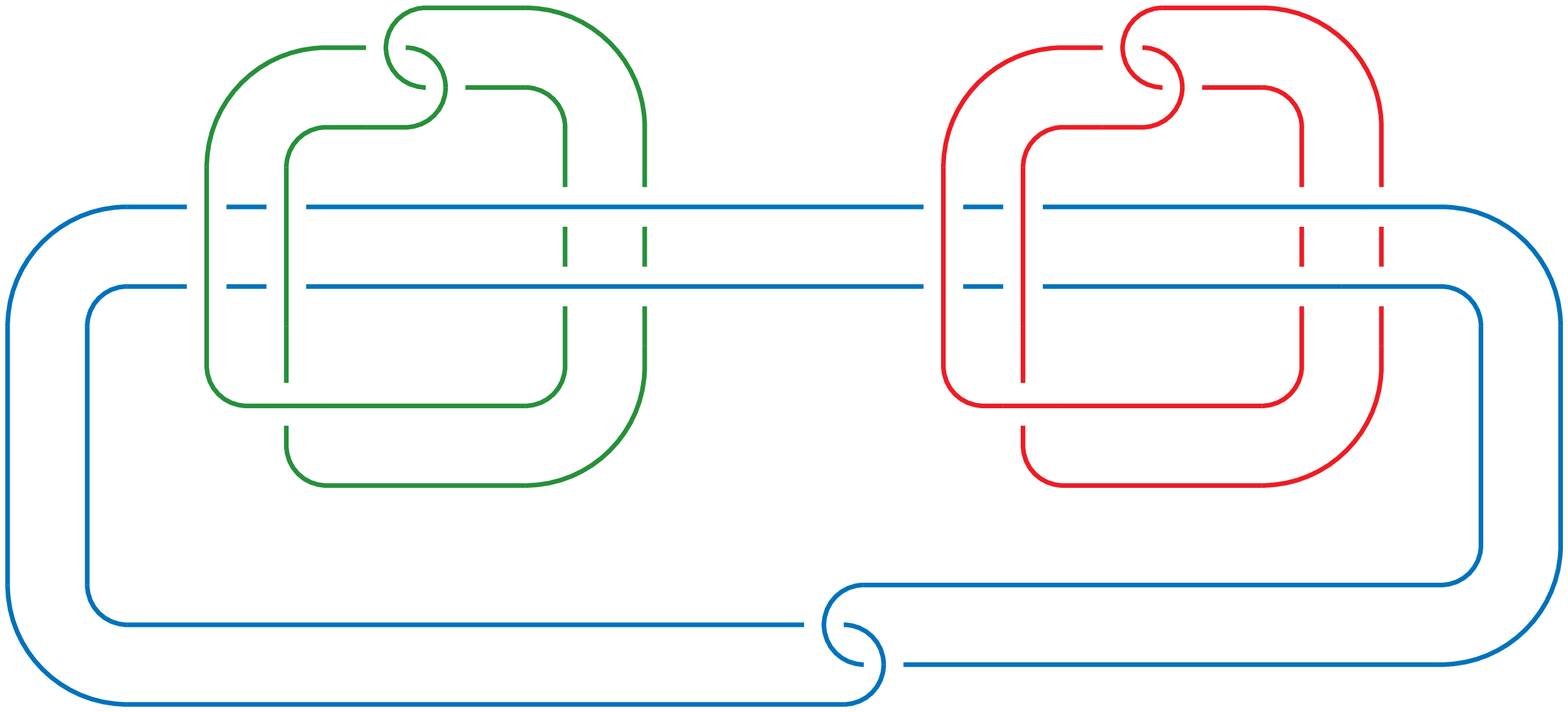}
    \caption{}
    \label{fig:our-fantastic-link}
  \end{figure}
First we check that the link is not already split.

\begin{lemma}\label{lemma:not-split}
  The sublinks $L_1 \cup L_2$ and $L_1 \cup L_3$ of $L$ are not split links.  In particular~$L$ is not a split link.
\end{lemma}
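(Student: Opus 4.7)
The plan is to exhibit a classical invariant of 2--component links that vanishes on split links and to verify it is nonzero for each of $L_1\cup L_2$ and $L_1\cup L_3$. The most direct candidate is the linking number. Reading signed crossings off the diagram in Figure~\ref{fig:our-fantastic-link}, I would compute $\lk(L_1,L_2)$ and $\lk(L_1,L_3)$. If both are nonzero, non-splitness of each pair is immediate, since the linking number of a split 2--component link vanishes.

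Should either linking number turn out to vanish (quite possible, given that the whole link is engineered as a counterexample to Question~\ref{question:efficient-strategy}, and interesting unlinking phenomena often require zero pairwise linking numbers), I would fall back on a stronger invariant that distinguishes the sublink from the 2--component unlink. Natural options are the multivariable Alexander polynomial, or the one-variable Alexander polynomial (which vanishes on split links in the Conway normalization), or the Jones polynomial (which is multiplicative up to a scalar factor under split union). Direct diagrammatic computation of the chosen invariant, compared against its value on the 2--component unlink, would complete the non-splitness verification.

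The final clause that $L$ itself is non-split then comes for free. Splitness is hereditary to sublinks: if three pairwise disjoint balls $B_1,B_2,B_3\subset S^3$ witness splitness of $L=L_1\sqcup L_2\sqcup L_3$, then for any pair of indices $i,j$ the two balls $B_i,B_j$ witness splitness of $L_i\cup L_j$. Contrapositively, non-splitness of any single 2--component sublink forces non-splitness of $L$.

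The only real obstacle I expect is careful bookkeeping on the diagram: assigning the correct sign at each mixed crossing, and, in the fallback scenario, executing a by-hand computation of a polynomial invariant without slips. No deeper tools are anticipated at this stage; the substantive work of the paper will, I expect, come later in bounding $u(L)$ and $\sp(L)$ sharply rather than in this preliminary lemma.
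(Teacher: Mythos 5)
Your first-line invariant fails for this link, and---more importantly---so does your principal fallback. All pairwise linking numbers of $L$ vanish by design (the paper uses exactly this in the proof of Theorem~\ref{thm:efficient-strategy}: a crossing change between distinct components would change a linking number away from zero, so the one-move splitting must be an $(L_1,L_1)$ change). You anticipated that, but your fallback to Alexander-type invariants is doomed for a structural reason you did not anticipate: $L_1\cup L_2$ is a \emph{boundary link}. The paper's own proof exhibits disjoint Seifert surfaces for the two components ($L_2$ bounds a genus one surface from the Seifert algorithm, and $L_1$ bounds a genus one surface disjoint from it, made of a long untwisted band and a $+1$ twisted bridge at the clasp). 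For a boundary link with at least two components, the multivariable Alexander polynomial and the Conway-normalized one-variable polynomial both vanish identically---exactly as they do for the split link $L_1\sqcup L_2$---so these invariants cannot distinguish the two. Your last option, the Jones polynomial, is neither computed in the proposal nor guaranteed to succeed: the Jones polynomial does not detect splitness in general (there exist non-split links whose Jones polynomial agrees with that of a split link), so at best the lemma would hinge on an unverified calculation.

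The paper's argument is built precisely to evade this vanishing phenomenon, and it is genuinely different from anything on your list: form a band sum $K$ of $L_1$ and $L_2$ along a band missing both Seifert surfaces. If $L_1\cup L_2$ were split, then by Miyazaki's theorem $K$ would be concordant to the connected sum $L_1 \# L_2$ (unknot $\#$ trefoil, determinant $3$), and the Fox--Milnor condition at $t=-1$ would force $\det(K)\cdot n^2 = \pm 3\,\ell^2$ for some nonzero integers $n,\ell$. A Seifert matrix computation for the explicit band sum gives $\det(K)=13$, and $13\,n^2=\pm 3\,\ell^2$ is impossible by unique factorization. Your closing step---that splitness is inherited by sublinks, so non-splitness of $L_1\cup L_2$ (or $L_1\cup L_3$, which is isotopic to it) forces non-splitness of $L$---is correct and agrees with the paper; but the substantive content of the lemma, the non-splitness of the two-component sublinks, is not reachable by the invariants you propose.
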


Before we provide  the proof of Lemma \ref{lemma:not-split} we introduce one more definition.
Given knots $K_1,\dots,K_m$ we denote the split link whose components
are $K_1,\dots,K_m$ by $K_1\sqcup \dots \sqcup K_m$.

\begin{proof}
First we claim that if $J$ is a 2-component split link $J_1 \sqcup J_2$, then any band sum of its components to form a knot $K$ will have determinant satisfying $\det(K)\cdot n^2 = \pm \det(J_1)\det(J_2)\cdot \ell^2$, for some nonzero integers $n,\ell \in \Z\sm\{0\}$.  To see this claim, note that by \cite{Mi98}, a knot arising from any band sum of a 2-component split link is concordant to the connected sum of the components.  Since $\det(K) = \Delta_K(-1)$, where $\Delta_K$ is the Alexander polynomial of $K$, the Fox-Milnor \cite{FM66} condition on Alexander polynomials of concordant knots\footnote{If $K$ and $J$ are concordant then there are Laurent polynomials  with integral coefficients $f$ and $g$ with $f(1) = \pm 1$ and $g(1)=\pm 1$ such that $\Delta_K(t) f(t)f(t^{-1}) \doteq \Delta_J(t) g(t)g(t^{-1})$.  Working modulo 2, we see that the conditions on $f$ and $g$ imply that $f(-1) \neq 0 \neq g(-1)$.}
 implies, by substituting $t=-1$, that $\det(K) \cdot n^2 = \pm \det(J_1 \# J_2) \cdot \ell^2$ for some nonzero integers $n,\ell \in \Z\sm\{0\}$.  Then the identity $\det(J_1 \# J_2)  = \pm \det(J_1)\det(J_2)$ completes the proof of the claim.

Now we use the claim to prove that $L_1 \cup L_2$ is not split.  The proof for $L_1 \cup L_3$ is identical since the two sublinks are isotopic: $L_1 \cup L_2 \cong L_1 \cup L_3$.
In our case, $J_1 = L_1$ is the unknot and $J_2= L_2$ is a trefoil.  Thus $\det(J_1)\det(J_2) = \Delta_{3_1}(-1) = (t^2-t+1)|_{t=-1} =3$, since the determinant of the unknot is one.

Apply  the Seifert algorithm to the diagram of the knot $L_2$ to obtain a genus one Seifert surface. There is a genus one Seifert surface for the knot $L_1$, disjoint from the Seifert surface for $L_2$, consisting of a long untwisted band and a $+1$ twisted `bridge' at the clasp.
 Add a band between the two which misses both Seifert surfaces.  This produces a knot $K$.
From the boundary connect sum of the two Seifert surfaces, using the band, we obtain a Seifert surface for $K$.  We compute a Seifert matrix for $K$ to be:
\[V = \begin{pmatrix}
  0 & 0 & 1 & 0 \\
  1 & 1 & 0 & 0 \\
  1 & 0 & -1 & 0 \\
  0 & 0 & 1 & -1
\end{pmatrix}.\]
From this we see that $\det(K) = \det(V+V^T) = 13$.  There do not exist any nonzero integers $n,\ell$ such that $13 \cdot n^2 = \pm 3 \cdot \ell^2$, by the uniqueness of prime factorisations, in which the parities of the exponents of $3$ and $13$ will never be equal. We deduce that $L_1 \cup L_2$ cannot be a split link, as desired.
\end{proof}

Now we use Lemma~\ref{lemma:not-split} to prove that the link $L$ of Figure~\ref{fig:our-fantastic-link} indeed has the properties claimed in
Theorem~\ref{thm:efficient-strategy}.

\begin{proof}[Proof of Theorem \ref{thm:efficient-strategy}]
The component labelled $L_1$ is an unknot, while the components $L_2$ and $L_3$ are trefoils.  Observe that a single crossing change on $L_1$, undoing the clasp, yields a split link $L_1 \sqcup L_2 \sqcup L_3$.  Therefore, since the splitting number is nonzero by Lemma~\ref{lemma:not-split}, the splitting number of $L$ is one: $\sp(L)=1$.

Since the unknotting number of the trefoil is one, we require at least one $(L_2,L_2)$ crossing change and at least one $(L_3,L_3)$ crossing change to turn $L$ into the unlink.  Therefore the unlinking number of $L$ is at least two.  Observe that the unlinking number is exactly two, since we may undo the clasps of $L_2$ and of $L_3$ and thus obtain the unlink: $u(L) =2$.

We need to see that any unlinking sequence which begins by splitting the link with a single crossing change must include at least two further crossing changes, making a total of three changes.  For then splitting before unknotting will be less efficient than an optimal crossing change sequence for unlinking.

We claim that splitting $L$ with one crossing change is only possible with an $(L_1,L_1)$ crossing change.  To see the claim, first note that a single crossing change between different components changes the corresponding pairwise linking number, which is originally zero.  Links with nonzero linking number cannot be split.  Therefore we need to consider a single $(L_2,L_2)$ change and a single $(L_3,L_3)$ change.  However an $(L_2,L_2)$ crossing change does not alter the link type of $L_1 \cup L_3$, which by Lemma~\ref{lemma:not-split} is not split.  Similarly an $(L_3,L_3)$ crossing change does not alter the link type of $L_1 \cup L_2$, which  again by Lemma~\ref{lemma:not-split} is not split.  The only remaining case is that of an $(L_1,L_1)$ change, which proves the claim.

However any $(L_1,L_1)$ crossing change does not alter the knot type of $L_2$, nor that of $L_3$.  Any split link resulting from one crossing change on $L$ is the split union of two trefoils and another knot (almost certainly the unknot, but we do not need this nor do we claim it.)
The two trefoils require one further crossing change each to unknot them.  This completes the proof
of Theorem \ref{thm:efficient-strategy}.
\end{proof}


We conclude this paper with the following question.

\begin{question}
Is there an example where all the components begin as unknots?
\end{question}

Note that even when all the components of a link are unknots, the optimal splitting can still necessarily produce knots~\cite{Ad96,BFP14}.

\begin{ack}
We thank Patrick Orson for providing us with Question~\ref{question:efficient-strategy}.
All three authors also  gratefully acknowledge the support provided by the SFB 1085 `Higher
Invariants' at the University of Regensburg, funded by the Deutsche
For\-schungsgemeinschaft (DFG).
\end{ack}

\end{document}